\newtheorem{theorem}{Theorem}
\newtheorem{lemma}[theorem]{Lemma}
\newtheorem{proposition}[theorem]{Proposition}
\theoremstyle{definition}
\newtheorem*{remark}{Remark}
\newcommand{\D}{\mathrm{d}}
\newcommand{\average}[1]{\left[ #1 \right]}
\newcommand{\cL}{\mathcal{L}}
\newcommand{\der}[2]{\frac{\partial #1}{\partial #2}}
\DeclareMathOperator{\EL}{EL}
\newcommand{\inner}[2]{\left\langle #1 \,, #2 \right\rangle}
\newcommand{\m}{\mathrm{mod}}
\newcommand{\cO}{\mathcal{O}}
\DeclareMathOperator{\sgn}{sgn}
\title{Numerical precession in variational discretizations of the Kepler problem}
\author{Mats Vermeeren}
\date{\normalsize \textit{Institut f\"ur Mathematik, MA 7-2, Technische Universit\"at Berlin, \\
Str.\@ des 17.\@ Juni 136, 10623 Berlin, Germany \\
E-mail:} \url{vermeeren@math.tu-berlin.de}}
\begin{document}

\maketitle

\begin{abstract}
\noindent
Kepler's first law states that the orbit of a point mass with negative energy in a classical gravitational potential is an ellipse with one of its foci at the gravitational center. In numerical simulations of this system one often observes a slight precession of the ellipse around the gravitational center. Using the Lagrangian structure of modified equations and a perturbative version of Noether's theorem, we provide leading order estimates of this precession for the implicit MidPoint rule (MP) and the St\"ormer-Verlet method (SV). Based on those estimates we construct some new numerical integrators that perform significantly better than MP and SV on the Kepler problem. 
\end{abstract}

\section{Introduction}

The Kepler problem models a point mass moving in a classical gravitational potential. Its Lagrangian is
\begin{equation}\label{eqsofmot}
\cL(x,\dot{x}) = \frac{1}{2} |\dot{x}|^2 + \frac{1}{|x|},
\end{equation}
where $|\cdot|$ denotes the Euclidean norm on $\mathbb{R}^N$. The equations of motion are 
\[ \ddot{x} = - \frac{x}{|x|^3}. \]
It is well known that the orbits of the Kepler problem with negative energy are ellipses with one of their foci at the origin. Since every orbit lies in a plane, it is sufficient to study this problem in $\mathbb{R}^2$.

In this work we are interested in numerical integration of the Kepler problem. Very good integrators for this problem are already available, see for example \cite{chin2007physics} and the references therein. Our main objective here is to illustrate methods to analyze and improve numerical integrators. For the sake of clarity we start from simple methods. Accordingly, the improved methods we construct will not be competitive compared with specialized methods available in the literature. 

Central in our treatment will be the \emph{precession} or \emph{perihelion advance} of the numerical orbits, i.e.\@ the slow rotation of the ellipse that the solution traces. For the exact solution there is no precession, but no common numerical method integrates the Kepler problem without precession. Using the theory of modified equations, we will provide leading order estimates of the precession for the St\"ormer-Verlet method and the implicit midpoint rule. We will use those estimates to construct some new methods which are superior for the Kepler problem. 

Throughout this paper we use the Lagrangian formulation of classical mechanics. We will describe the modified equations using modified Lagrangians and use a version of Noether's theorem to analyze the perturbation. We start by mentioning a few well-known properties of the Kepler problem that will be useful later on. 

\begin{proposition}
The angular momentum $\mathds{L} = x_1 \dot{x}_2 - \dot{x}_1 x_2$ and the total energy $\mathds{E} = \frac{1}{2} |\dot{x}|^2 - \frac{1}{|x|}$ are constants of motion of the Kepler problem in $\mathbb{R}^2$. Furthermore, the angular momentum satisfies
\[ \mathds{L}^2 = |x| |\dot{x}|^2 - \inner{x}{\dot{x}}^2, \]
where the brackets $\inner{\cdot}{\cdot}$ denote the standard scalar product on $\mathbb{R}^N$.
\end{proposition}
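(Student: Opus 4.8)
The plan is to verify all three claims by direct computation: the two conservation laws follow from differentiating in time and substituting the equations of motion $\ddot{x} = -x/|x|^3$, while the final identity is pure algebra. There is no deep obstacle here; the only point that demands care is the exponent of $|x|$ in the stated identity, which I expect to be $2$ rather than $1$ (see below). First I would establish conservation of $\mathds{L}$. Differentiating $\mathds{L} = x_1\dot{x}_2 - \dot{x}_1 x_2$ in time, the cross terms $\dot{x}_1\dot{x}_2$ cancel, leaving $\dot{\mathds{L}} = x_1\ddot{x}_2 - \ddot{x}_1 x_2$, and inserting $\ddot{x}_i = -x_i/|x|^3$ gives $\dot{\mathds{L}} = (-x_1 x_2 + x_1 x_2)/|x|^3 = 0$. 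Equivalently, one may invoke Noether's theorem: $\cL$ is invariant under the $\mathrm{SO}(2)$-action of planar rotations, whose associated conserved charge is exactly the angular momentum. Since the paper's later analysis rests on a perturbative Noether argument, I would flag this symmetry interpretation as the conceptually preferred route.

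Next I would treat the energy. Writing $\mathds{E} = \frac{1}{2}|\dot{x}|^2 - \frac{1}{|x|}$ and differentiating, I use $\frac{\D}{\D t}|x|^{-1} = -\inner{x}{\dot{x}}/|x|^3$ to obtain $\frac{\D}{\D t}\mathds{E} = \inner{\dot{x}}{\ddot{x}} + \inner{x}{\dot{x}}/|x|^3$. Substituting $\ddot{x} = -x/|x|^3$ makes the two terms cancel, so $\mathds{E}$ is constant. As with $\mathds{L}$, this is the Noether charge of time-translation invariance: the Lagrangian carries no explicit time dependence, so the corresponding conserved quantity is the energy.

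Finally, for the identity I would expand both sides in coordinates. Squaring gives $\mathds{L}^2 = (x_1\dot{x}_2 - x_2\dot{x}_1)^2 = x_1^2\dot{x}_2^2 + x_2^2\dot{x}_1^2 - 2 x_1 x_2 \dot{x}_1 \dot{x}_2$, whereas $|x|^2|\dot{x}|^2 - \inner{x}{\dot{x}}^2$ expands, by the two-dimensional Lagrange identity, to precisely the same expression, the diagonal terms $x_1^2\dot{x}_1^2$ and $x_2^2\dot{x}_2^2$ cancelling against $\inner{x}{\dot{x}}^2$. This is the step where I would pause: the computation forces a factor $|x|^2$, not $|x|$, in front of $|\dot{x}|^2$, so I would read the statement as the Lagrange identity $\mathds{L}^2 = |x|^2 |\dot{x}|^2 - \inner{x}{\dot{x}}^2$ and treat the printed $|x|$ as a typographical slip. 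Thus the only genuine obstacle is bookkeeping the exponent correctly; the rest is mechanical.
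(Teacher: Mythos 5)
Your computations are correct, and the paper itself offers no proof of this proposition (it defers to Goldstein), so a direct verification like yours is exactly what is called for. Your reading of the exponent as a typographical slip is also right: the identity must be the Lagrange identity $\mathds{L}^2 = |x|^2\,|\dot{x}|^2 - \inner{x}{\dot{x}}^2$, and this is confirmed by how the paper actually uses it later, namely in passing from the first to the second line of Equation \eqref{before-lemma}, where the substitution $\inner{x}{\dot{x}}^2 = |x|^2|\dot{x}|^2 - \mathds{L}^2$ (together with $|\dot{x}|^2 = 2\mathds{E} + 2/|x|$) reproduces the stated coefficients $30$, $24\mathds{E}$, and $-15\mathds{L}^2$. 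The only quantity that would not survive scrutiny with the printed exponent is dimensional consistency, which you checked; nothing further is needed.
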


\begin{proposition}\label{prop-EandL}
Let $a$ and $b$ denote the semimajor and semiminor axes of an orbit respectively. Then
\begin{itemize}
\item the square of the angular momentum is $\mathds{L}^2 = \frac{b^2}{a}$,
\item the energy is $\mathds{E} = \frac{-1}{2a}$,
\item the period is $T = 2 \pi a^{3/2}$,
\item the eccentricity is $e = \sqrt{1-\frac{b^2}{a^2}}$.
\end{itemize}
\end{proposition}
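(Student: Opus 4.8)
The plan is to establish the four formulas by combining the conservation laws from the previous proposition with standard facts about the geometry of the Kepler ellipse, all evaluated at strategically chosen points of the orbit. The key observation is that both $\mathds{E}$ and $\mathds{L}$ are constants of motion, so I may compute them at whichever point of the orbit makes the algebra cleanest — typically the perihelion (closest approach to the focus at the origin) and aphelion (farthest point), where the velocity is purely tangential and hence $\inner{x}{\dot{x}} = 0$.

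**First I would** recall the geometry of the ellipse with a focus at the origin: the perihelion distance is $r_- = a(1-e)$ and the aphelion distance is $r_+ = a(1+e)$, with $e = \sqrt{1 - b^2/a^2}$, which immediately gives the stated eccentricity formula and the relation $b^2 = a^2(1-e^2)$. At these two turning points $\inner{x}{\dot{x}} = 0$, so the identity $\mathds{L}^2 = |x|\,|\dot{x}|^2 - \inner{x}{\dot{x}}^2$ from the preceding proposition reduces to $\mathds{L}^2 = r_\pm |\dot{x}|_\pm^2$, while the energy reads $\mathds{E} = \tfrac{1}{2}|\dot{x}|_\pm^2 - \tfrac{1}{r_\pm}$. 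These give two equations at perihelion and two at aphelion; eliminating the speeds $|\dot{x}|_\pm$ yields a pair of equations in $\mathds{E}$, $\mathds{L}^2$, $r_-$, $r_+$ that I can solve.

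**Next I would** substitute $r_\pm = a(1\pm e)$ and solve: the energy equation gives $\mathds{E} = -1/(2a)$, and the angular-momentum relation then gives $\mathds{L}^2 = a(1-e^2) = b^2/a$, matching the first two bullet points. **For the period**, I would invoke Kepler's third law, obtaining it most cleanly from the area interpretation of angular momentum: since $\mathds{L}$ equals twice the areal velocity, integrating over one period gives $\mathds{L}\, T/2 = \pi a b$ (the area of the ellipse), and substituting $\mathds{L} = b/\sqrt{a}$ yields $T = 2\pi a^{3/2}$. **The main obstacle** is less any single computation than bookkeeping the geometric conventions consistently — in particular pinning down that the origin sits at a focus (not the center) so that $r_\pm = a(1\pm e)$ rather than $a\pm$ something else, and tracking the overall normalization of $\mathds{L}$ against the areal velocity. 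Once those conventions are fixed, each formula follows from elementary elimination, so I expect the proof to be short, with the period formula being the only part requiring an appeal to the swept-area argument rather than pure algebra at the turning points.
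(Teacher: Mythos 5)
The paper does not actually prove this proposition: it defers entirely to the cited reference (Goldstein, Chapter 3), so there is no in-paper argument to compare against. Your proposal is the standard textbook derivation one would find there --- evaluate $\mathds{E}$ and $\mathds{L}$ at the two turning points where $\inner{x}{\dot{x}}=0$, eliminate the speeds, read off $a$ and $b$ from the roots $r_\pm=a(1\pm e)$, and get the period from the swept-area interpretation of $\mathds{L}$ --- and it is the right approach. One concrete slip, inherited from the paper: you copied the identity $\mathds{L}^2=|x|\,|\dot{x}|^2-\inner{x}{\dot{x}}^2$ literally, but this is a typo (Lagrange's identity gives $\mathds{L}^2=|x|^2|\dot{x}|^2-\inner{x}{\dot{x}}^2$), and with your literal reduction $\mathds{L}^2=r_\pm|\dot{x}|_\pm^2$ the elimination actually breaks down: substituting $|\dot{x}|_\pm^2=\mathds{L}^2/r_\pm$ into $\mathds{E}=\tfrac12|\dot{x}|_\pm^2-1/r_\pm$ gives $\mathds{E}=(\mathds{L}^2/2-1)/r_\pm$, which forces $r_+=r_-$. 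You need $\mathds{L}=r_\pm|\dot{x}|_\pm$ at the turning points; then the energy relation becomes the quadratic $\mathds{E}r^2+r-\mathds{L}^2/2=0$ with roots $r_\pm$, so $r_++r_-=-1/\mathds{E}=2a$ and $r_+r_-=-\mathds{L}^2/(2\mathds{E})=a^2(1-e^2)=b^2$, yielding $\mathds{E}=-1/(2a)$ and $\mathds{L}^2=b^2/a$ as claimed. With that correction the rest goes through: $e=\sqrt{1-b^2/a^2}$ is just the defining relation $b^2=a^2(1-e^2)$ of the ellipse, and $T=2\pi ab/\mathds{L}=2\pi a^{3/2}$ from the areal-velocity argument.
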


A thorough analytical study of the Kepler problem, including proofs of these properties, can be found for example in \cite[Chapter 3]{goldstein1980classical}.

\section{Modified Lagrangians}\label{sect-old}

To study the behavior of a numerical method it is often useful to consider the modified equation, a perturbation of the original differential equation whose solutions interpolate the discrete solutions. Generally, modified equations are found as formal power series in the step size of the method. Here we will truncate these power series after the first nontrivial term. For an introduction to this subject, see \cite[Chapter IX]{hairer2006geometric} and the references therein.

It is well-known that the modified equation of a symplectic integrator applied to a Hamiltonian system is again Hamiltonian. This means that the modified equation of a variational integrator applied to a Lagrangian system is Lagrangian as well. We will use a Lagrangian for the modified equation as the basis of our analysis. For its construction we refer to \cite{vermeeren2015modified}.

The modified equation of a numerical integrator for the Kepler problem describes a perturbed Kepler problem. Perturbed Kepler problems are very relevant in celestial mechanics. In particular, one of the classical tests of general relativity is that its perturbation in the Kepler potential accounts for the precession of the orbit of the planet Mercury \cite{will1981theory} (along with perturbations caused by the gravitational pull of the other planets). A Hamiltonian treatment of perturbed Kepler problems can be found for example in \cite{goldstein1980classical} or  \cite{chin2007physics}. Here we will work in the Lagrangian framework.

\subsection{St\"ormer-Verlet method}

The St\"ormer-Verlet (SV) discretization with step size $h$ of a second order differential equation 
$\ddot{x} = f(x)$ is
\[ \frac{x_{k+1} - 2x_k + x_{k-1}}{h^2} = f(x_k). \]
If $f(x) = -\frac{\D}{\D x}U(x)$, this is the discrete Euler-Lagrange equation for
\[ L_{SV}(x_k,x_{k+1}) = \frac{1}{2} \left|\frac{x_{k+1}-x_k}{h}\right|^2 - \frac{1}{2} U(x_k) - \frac{1}{2} U(x_{k+1}). \]
As shown in \cite{vermeeren2015modified}, the modified Lagrangian of second order accuracy is
\begin{align*}
\cL_{\m,2}(x,\dot{x}) 
&=  \frac{1}{2} |\dot{x}|^2 - U(x) + \frac{h^2}{24} \Big( \inner{ U'(x) }{ U'(x) } - 2 \inner{ \dot{x} }{ U''(x) \dot{x} } \Big).
\end{align*} 
By definition its Euler-Lagrange equation agrees with the modified equation with a defect of order $\cO(h^4)$. In the particular case of the Kepler problem this becomes
\begin{equation}\label{modlag-sv}
\cL_{\m,2}(x,\dot{x}) 
= \frac{1}{2} |\dot{x}|^2 + \frac{1}{|x|} + \frac{h^2}{24} \left( \frac{1}{|x|^4} - 2 \frac{ |\dot{x}|^2 }{|x|^3} + 6 \frac{\inner{ x }{ \dot{x} }^2}{|x|^5} \right).
\end{equation}
A comparison of the numerical solution and the solution of the  modified equation of second order accuracy is shown in Figure\ \ref{fig-sv}.

\begin{figure}[ht]
\begin{minipage}{.49\linewidth}
\centering
\includegraphics[width=.9\linewidth]{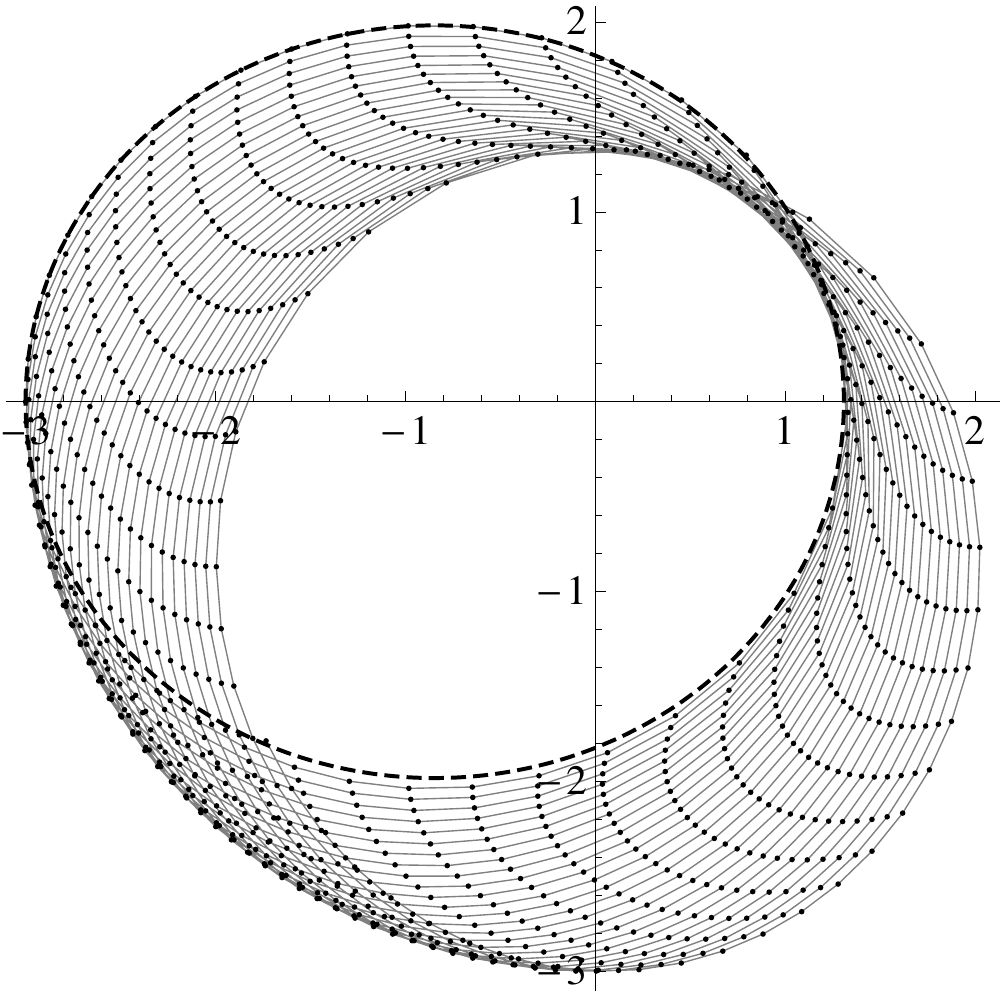}
\end{minipage}\hfill%
\begin{minipage}{.49\linewidth}
\centering
\includegraphics[width=.9\linewidth]{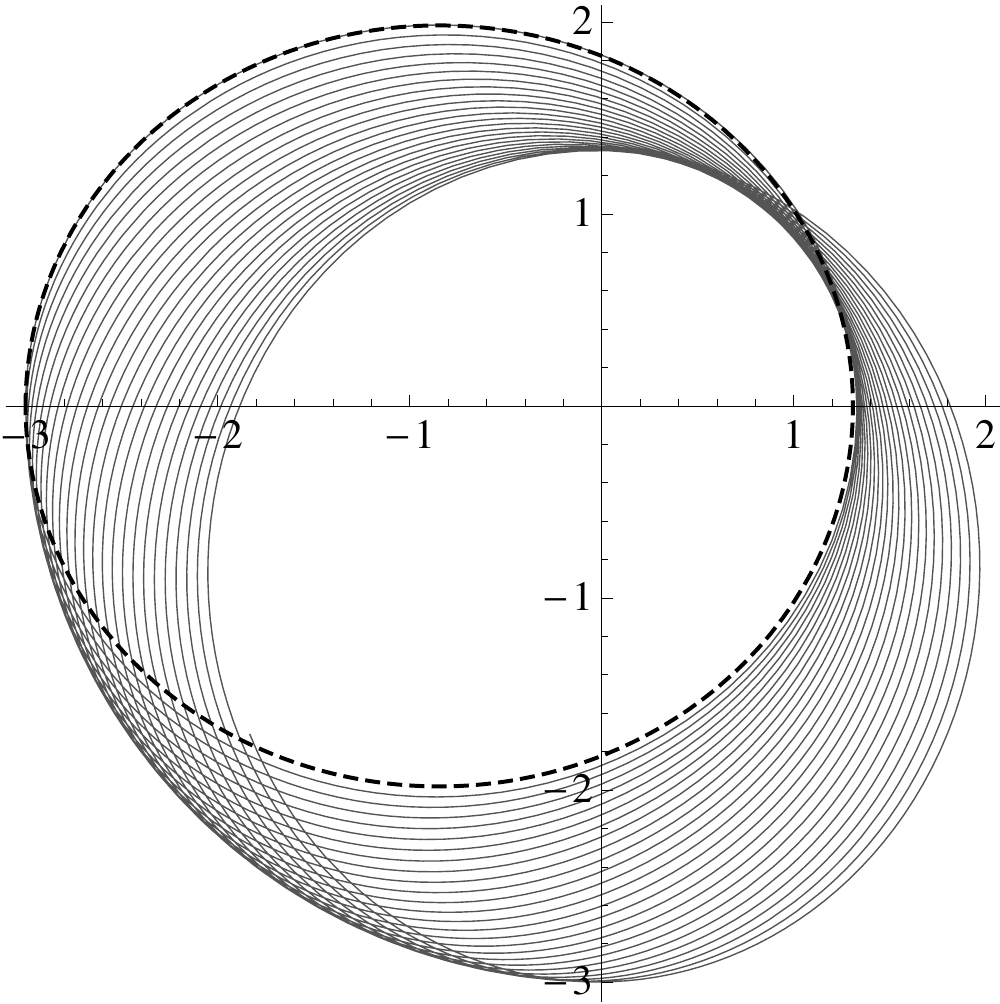}
\end{minipage}
\caption{St\"ormer-Verlet method with $1000$ steps of size $h=0.5$. Left: numerical solution. Right: modified equation of second order accuracy. In both images the dashed ellipse is the exact solution. The initial values are chosen as described in Section\ \ref{sect-init}.}\label{fig-sv}
\end{figure}

\subsection{Implicit midpoint rule}

The second order formulation of the implicit midpoint rule (MP) applied to the differential equation $\ddot{x} = f(x)$ is
\[ \frac{x_{k+1} - 2x_k + x_{k-1}}{h^2} = \frac{1}{2} f\left( \frac{x_k+x_{k+1}}{2} \right) + \frac{1}{2} f\left( \frac{x_{k-1}+x_k}{2} \right). \]
If $f(x) = -\frac{\D}{\D x}U(x)$, this is the discrete Euler-Lagrange equation for
\[ L_{MP}(x_k,x_{k+1}) = \frac{1}{2} \left| \frac{x_{k+1}-x_k}{h} \right|^2 - U\left(\frac{x_k + x_{k+1}}{2}\right). \]
The modified Lagrangian of second order accuracy is
\[ \cL_{\m,2} (x,\dot{x}) 
= \frac{1}{2} |\dot{x}|^2 + \frac{h^2}{24} \Big( \inner{ U'(x) }{ U'(x) } + \inner{ \dot{x} }{ U''(x) \dot{x} } \Big). \]
For the Kepler problem we have
\begin{equation}\label{modlag-mp}
\cL_{\m,2}(x,\dot{x}) 
= \frac{1}{2}|\dot{x}|^2 + \frac{1}{|x|} + \frac{h^2}{24} \left( \frac{1}{|x|^4} + \frac{|\dot{x}|^2}{|x|^3} - 3 \frac{\inner{ x }{ \dot{x} }^2}{|x|^5} \right). 
\end{equation}
A comparison of the numerical solution and the solution of the modified equation of second order accuracy is shown in Figure\ \ref{fig-mp}.

\begin{figure}[ht]
\begin{minipage}{.49\linewidth}
\centering
\includegraphics[width=.9\linewidth]{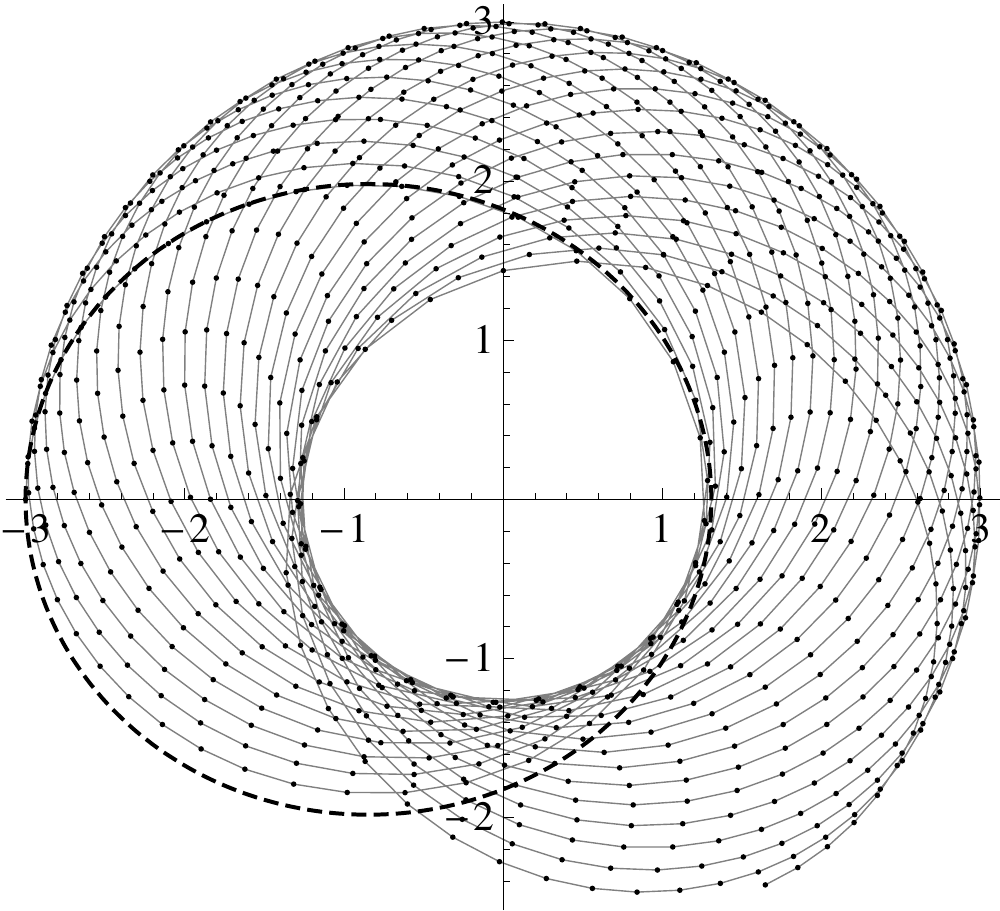}
\end{minipage}%
\hfill
\begin{minipage}{.49\linewidth}
\centering
\includegraphics[width=.9\linewidth]{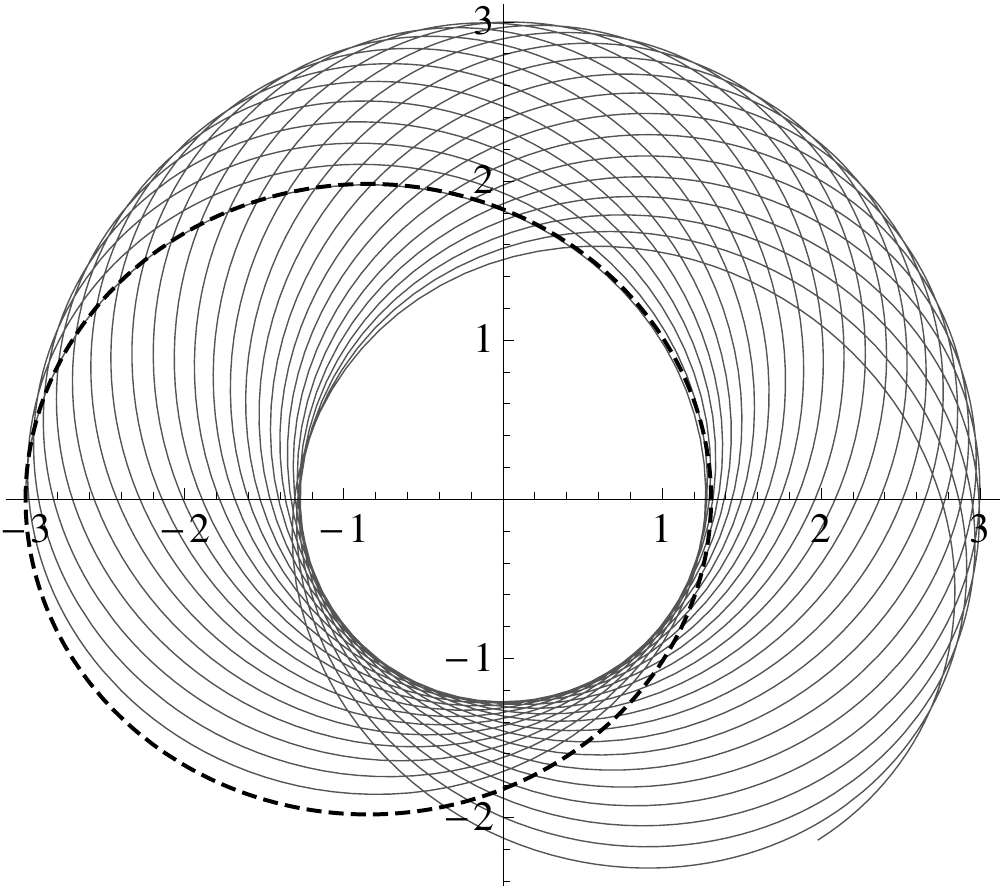}
\end{minipage}%
\caption{Implicit midpoint rule with $1000$ steps of size $h=0.5$. Left: numerical solution. Right: modified equation of second order accuracy. In both images the dashed ellipse is the exact solution. The initial values are chosen as described in Section\ \ref{sect-init}.}\label{fig-mp}
\end{figure}

\section{Noether's Theorem with perturbations}

The key observation in our study of the perturbed Kepler problem is that Noether's theorem \cite{noether1918invariante,olver2000applications} can be extended to describe how perturbations affect conserved quantities.

\begin{theorem}\label{thm-noether}
Consider a Lagrange function $\cL : T\mathbb{R}^2 \rightarrow \mathbb{R}$ and a horizontal vector field $\xi$ on $T\mathbb{R}^2$, i.e.\@ $\xi = \xi_1 \der{}{x_1} + \xi_2 \der{}{x_2}$ with coefficients $\xi_i$ that are functions $T\mathbb{R}^2 \rightarrow \mathbb{R}$. Let
\[ \xi^{(1)} = \sum_{i=1}^2 \left( \xi_i \der{}{x_i} + \dot{\xi}_i \der{}{\dot{x}_i} \right) \]
be the first prolongation of $\xi$, evaluated on solutions of the Euler-Lagrange equations, i.e.\@ with
\[ \dot{\xi}_i = \inner{ \der{\xi_i}{x} }{ \dot{x} } + \inner{ \der{\xi_i}{\dot{x}} }{ \left(\der{^2 \cL}{\dot{x}^2}\right)^{-1} \left(\der{\cL}{x} - \der{^2 \cL}{x \partial \dot{x}} \dot{x} \right) } . \]
If
\[ \xi^{(1)} \cL = \inner{ \der{G}{x} }{ \dot{x} } + \varepsilon F \]
for some functions $F: T \mathbb{R}^2 \rightarrow \mathbb{R}$ and $G:  \mathbb{R}^2 \rightarrow \mathbb{R}$ and a (small) parameter $\varepsilon \in \mathbb{R}$, then on solutions of the Euler-Lagrange equations we have
\[ \frac{\D}{\D t}\left( \inner{ \der{\cL}{\dot{x}} }{ \xi } - G \right) = \varepsilon F, \]
where by abuse of notation $\xi = (\xi_1,\xi_2)$. In particular, if $\varepsilon F = 0$, we have a conserved quantity $A := \sum_i \der{\cL}{\dot{x}_i} \xi_i - G$.
\end{theorem}
\begin{proof}
We have
\begin{align*} 
\frac{\D}{\D t}\left( \inner{ \der{\cL}{\dot{x}} }{ \xi } - G \right) 
&= \inner{ \frac{\D}{\D t} \der{\cL}{\dot{x}} }{ \xi } + \left( \xi^{(1)} \cL - \inner{ \der{\cL}{x} }{ \xi } \right) - \inner{ \der{G}{x} }{ \dot{x} } \\
&= \xi^{(1)} \cL - \inner{ \der{G}{x} }{ \dot{x} } - \inner{ \der{\cL}{x} - \frac{\D}{\D t} \der{\cL}{\dot{x}} }{ \xi } = \varepsilon F . \qedhere
\end{align*}
\end{proof}

\subsection{The Laplace-Runge-Lenz vector}

Following \cite{levy1971conservation} we consider the Kepler problem and the vector field $\xi$ defined by
\begin{equation}\label{defxi}
\xi_1 = -\frac{1}{2} x_2 \dot{x}_2 \qquad \text{and} \qquad
\xi_2 = x_1 \dot{x}_2 - \frac{1}{2} \dot{x}_1 x_2.
\end{equation}
On solutions we have
\[
\dot{\xi}_1 = -\frac{1}{2} \dot{x}_2^2 + \frac{1}{2} \frac{x_2^2}{|x|^3}  \qquad \text{and} \qquad
\dot{\xi}_2 = \frac{1}{2} \dot{x}_1 \dot{x}_2 - \frac{1}{2} \frac{x_1 x_2}{|x|^3}.
\]
A straightforward calculation then shows that
\[
\xi^{(1)} \cL = \inner{\der{\cL}{x}}{\xi} + \inner{\der{\cL}{\dot{x}}}{\dot{\xi}}
= \frac{\dot{x}_1}{|x|} - \frac{\inner{x}{\dot{x}} x_1}{|x|^3} 
= \frac{\D}{\D t}\left(\frac{x_1}{|x|}\right).
\]
Hence we can apply the unperturbed Noether theorem (i.e.\@ $\varepsilon F = 0$) with $G(x) = \frac{x_1}{|x|}$ and find that
\[ A(x,\dot{x}) = - \dot{x}_1 x_2 \dot{x}_2 + x_1 \dot{x}_2^2 - \frac{x_1}{|x|} = |\dot{x}|^2 x_1 - \inner{x}{\dot{x}} \dot{x}_1 - \frac{x_1}{|x|} \]
is a conserved quantity. 

The conserved quantity $A$ is the first component of the Laplace-Runge-Lenz (LRL) vector, which points from the gravitational center to the aphelion and has a magnitude equal to the eccentricity $e$ of the orbit. The second component of the LRL vector is 
\[ B(x,\dot{x}) = |\dot{x}|^2 x_2 - \inner{x}{\dot{x}} \dot{x}_2 - \frac{x_2}{|x|} \]
and can be obtained by setting $\xi_1 = x_2 \dot{x}_1 - \frac{1}{2} x_1 \dot{x}_2$ and $\xi_2 = -\frac{1}{2} x_1 \dot{x}_1$. We denote by $\omega = \arctan\left(\frac{B}{A}\right)$ the angle of the LRL vector with the first coordinate axis.

\begin{remark}
The existence of this conserved quantity is related to the fact that the three-dimensional Kepler problem possesses an $SO(4)$-symmetry, rather than just the obvious $SO(3)$-symmetry. In suitable coordinates a solution can be ``rotated'' into other solutions with the same energy but different angular momentum \cite{morehead2005visualizing,rogers1973symmetry}.
\end{remark}

\subsection{Precession in the perturbed Kepler problem}

Now consider the perturbed Kepler problem, $\cL = \frac{1}{2}|\dot{x}|^2 + \frac{1}{|x|} + \varepsilon \overline{\cL} (x,\dot{x})$. 
Note that this also induces a perturbation in the prolonged vector field, which now reads $\xi^{(1)} + \varepsilon \overline{\xi^{(1)}}$, because the quantities $\dot{\xi}_1$ and $\dot{\xi}_2$ contain second derivatives which are evaluated using the perturbed equations of motion. We call the change in angle of the LRL vector over one period of the unperturbed system the \emph{precession rate}.

\begin{proposition}\label{prop-precession}
If the major axis of an orbit is $\cO(\varepsilon)$-close to the $x_2$-axis, then the precession rate is
\begin{equation}\label{precession}
\Delta \omega = - \frac{2 \varepsilon T}{e} \average{ \inner{ \EL(\overline{\cL}) }{ \xi } } + \cO(\varepsilon^2),
\end{equation}
where $T$ is the period of the unperturbed orbit, $\EL(\overline{\cL}) = \der{\overline{\cL}}{x} - \frac{\D}{\D t} \der{\overline{\cL}}{\dot{x}}$ is the Euler-Lagrange expression for $\overline{\cL}$, $\xi = (\xi_1,\xi_2)$ is defined by Equation \eqref{defxi}, and $\average{\,\cdot\,}$ denotes the average over one period.
\end{proposition}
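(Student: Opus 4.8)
The plan is to track the Laplace--Runge--Lenz vector $(A,B)$ along solutions of the \emph{perturbed} problem and to read the precession off the slow drift of its first component $A$. Since $\omega = \arctan(B/A)$, we have $\frac{\D \omega}{\D t} = \frac{A\dot B - B\dot A}{A^2+B^2}$. The hypothesis that the major axis stays $\cO(\varepsilon)$-close to the $x_2$-axis (which persists over one period, since the precession per period is itself only $\cO(\varepsilon)$) means the LRL vector is nearly vertical, so that $A = \cO(\varepsilon)$, $B = e + \cO(\varepsilon)$ and $A^2 + B^2 = e^2 + \cO(\varepsilon)$. As $A$ and $B$ are conserved at zeroth order we also have $\dot A, \dot B = \cO(\varepsilon)$, whence $A\dot B - B\dot A = -e\dot A + \cO(\varepsilon^2)$ and therefore $\frac{\D \omega}{\D t} = -\frac{\dot A}{e} + \cO(\varepsilon^2)$. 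This reduces the statement to computing $\Delta A = \int_0^T \dot A \,\D t$, via $\Delta\omega = -\frac{1}{e}\Delta A + \cO(\varepsilon^2)$.

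To compute $\dot A$ I would use the perturbed equation of motion. The Euler--Lagrange equation of $\cL_0 + \varepsilon\overline{\cL}$ with $\cL_0 = \frac12|\dot x|^2 + \frac1{|x|}$ reads $\EL(\cL_0) + \varepsilon\EL(\overline{\cL}) = 0$, i.e.\@ $\ddot x = -\frac{x}{|x|^3} + \varepsilon\EL(\overline{\cL})$; since the correction carries a factor $\varepsilon$, the implicit $\ddot x$ inside $\EL(\overline{\cL})$ may be replaced by its unperturbed value up to $\cO(\varepsilon^2)$. Regarding $A = \inner{\dot x}{\xi} - G$ with $G = \frac{x_1}{|x|}$ as a function on $T\mathbb{R}^2$, we have $\frac{\D A}{\D t} = \inner{\der{A}{x}}{\dot x} + \inner{\der{A}{\dot x}}{\ddot x}$. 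Because $A$ is conserved along \emph{every} unperturbed solution, the function obtained by substituting $\ddot x = -\frac{x}{|x|^3}$ vanishes identically, leaving $\frac{\D A}{\D t} = \varepsilon\inner{\der{A}{\dot x}}{\EL(\overline{\cL})} + \cO(\varepsilon^2)$. A direct computation from \eqref{defxi} gives the clean identity $\der{A}{\dot x} = 2\xi$ (structurally: $\xi$ is homogeneous of degree one in $\dot x$ and the matrix $\der{\xi_i}{\dot x_j}$ is symmetric), so that $\frac{\D A}{\D t} = 2\varepsilon\inner{\EL(\overline{\cL})}{\xi} + \cO(\varepsilon^2)$.

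It remains to integrate over one unperturbed period $T$. Pulling out the factor $\varepsilon$, the first-order integrand $\inner{\EL(\overline{\cL})}{\xi}$ may be evaluated along the unperturbed orbit at the cost of $\cO(\varepsilon^2)$, and then $\int_0^T \inner{\EL(\overline{\cL})}{\xi}\,\D t = T\average{\inner{\EL(\overline{\cL})}{\xi}}$ by definition of the period average. Hence $\Delta A = 2\varepsilon T\average{\inner{\EL(\overline{\cL})}{\xi}} + \cO(\varepsilon^2)$, and substituting into $\Delta\omega = -\frac{1}{e}\Delta A + \cO(\varepsilon^2)$ yields exactly \eqref{precession}; in particular the factor $2$ there is the one coming from $\der{A}{\dot x} = 2\xi$.

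The main obstacle is not any individual calculation but the consistent bookkeeping of the $\cO(\varepsilon^2)$ remainders, which is what gives the leading-order estimate its meaning. Three places require care: (i) replacing $\ddot x$ by its unperturbed value inside $\EL(\overline{\cL})$; (ii) the geometric reduction $\frac{\D \omega}{\D t} = -\frac{\dot A}{e} + \cO(\varepsilon^2)$, where $A\dot B$ is dropped and $B$, $A^2+B^2$ are frozen at their unperturbed values --- this is exactly where the orientation hypothesis enters, since $A = \cO(\varepsilon)$ is what makes only the drift of $A$ contribute at first order; and (iii) replacing the time integral by $T$ times the average over the unperturbed orbit. Each is a routine leading-order argument, but they must be combined carefully for the $\cO(\varepsilon^2)$ error in \eqref{precession} to be valid.
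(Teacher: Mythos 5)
Your argument is correct and arrives at the same key intermediate identity $\frac{\D A}{\D t} = 2\varepsilon\inner{\EL(\overline{\cL})}{\xi} + \cO(\varepsilon^2)$ as the paper, but by a genuinely different route for that step. The paper derives it from the perturbed Noether theorem (Theorem \ref{thm-noether}): it sets $F = \overline{\xi^{(1)}}\cL + \xi^{(1)}\overline{\cL}$ and obtains the factor $2$ as the sum of two separate contributions of $\inner{\EL(\overline{\cL})}{\xi}$, one from $\xi^{(1)}\overline{\cL} - \frac{\D}{\D t}\inner{\der{\overline{\cL}}{\dot{x}}}{\xi}$ and one from the perturbation $\overline{\xi^{(1)}}$ of the prolonged vector field, using $\der{\xi_1}{\dot{x}}\dot{x}_1 + \der{\xi_2}{\dot{x}}\dot{x}_2 = \xi$. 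You instead treat $A$ as a known first integral of the unperturbed Kepler flow, so that its Lie derivative along the unperturbed vector field vanishes identically on $T\mathbb{R}^2$ and only the $\varepsilon\EL(\overline{\cL})$ correction to $\ddot{x}$ contributes; your factor $2$ comes from the single identity $\der{A}{\dot{x}} = 2\xi$, which checks out ($\der{A}{\dot{x}_1} = -x_2\dot{x}_2 = 2\xi_1$ and $\der{A}{\dot{x}_2} = 2x_1\dot{x}_2 - \dot{x}_1 x_2 = 2\xi_2$), and your structural explanation via Euler's homogeneity relation and the symmetry of $\der{\xi_i}{\dot{x}_j}$ is valid. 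Your version is more elementary --- it needs neither Theorem \ref{thm-noether} nor the prolongation formalism, only the classical conservation of the Laplace--Runge--Lenz vector and a variation-of-constants computation --- whereas the paper's version fits the proposition into the Noether-with-perturbations framework that is the thematic point of the article and would also apply when the conserved quantity is produced by the symmetry rather than known in advance. The geometric reduction $\dot{\omega} = -\dot{A}/e + \cO(\varepsilon^2)$ and the final passage to the period average are identical in both treatments, and your bookkeeping of the $\cO(\varepsilon^2)$ remainders at the three places you flag is sound.
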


\begin{proof}
Set $G = \frac{x_1}{|x|}$ and $F = \overline{\xi^{(1)}} \cL + \xi^{(1)} \overline{\cL}$, then
\[ \left( \xi^{(1)} + \varepsilon \overline{\xi^{(1)}} \right) \left( \cL + \varepsilon \overline{\cL} \right) = \inner{\der{G}{\dot{x}}}{\dot{x}} + \varepsilon F + \cO(\varepsilon^2), \]
where $\xi^{(1)} + \varepsilon \overline{\xi^{(1)}}$ is the first prolongation of $\xi$ on solutions of the Euler Lagrange equations of the perturbed Lagrangian $\cL + \varepsilon \overline{\cL}$. Hence by Theorem \ref{thm-noether} it follows that
\[ \frac{\D}{\D t}\left( \inner{ \der{(\cL + \varepsilon \overline{\cL})}{\dot{x}} }{ \xi } - G \right) = \varepsilon F + \cO(\varepsilon^2), \]
from which we conclude that 
\begin{align}
\frac{\D A}{\D t} 
&= \varepsilon \left( F - \frac{\D}{\D t} \inner{ \der{\overline{\cL}}{\dot{x}} }{ \xi } \right) + \cO(\varepsilon^2) \notag\\
&= \varepsilon \left( \overline{\xi^{(1)}} \cL + \xi^{(1)} \overline{\cL} - \frac{\D}{\D t} \inner{ \der{\overline{\cL}}{\dot{x}} }{ \xi } \right) + \cO(\varepsilon^2). \label{precession-step1}
\end{align}
Now observe that
\begin{align*}
\xi^{(1)} \overline{\cL} - \frac{\D}{\D t} \inner{ \der{\overline{\cL}}{\dot{x}} }{ \xi }
&= \inner{ \der{\overline{\cL}}{x} }{ \xi } + \inner{ \der{\overline{\cL}}{\dot{x}} }{ \dot{\xi} } 
- \frac{\D}{\D t}  \inner{ \der{\overline{\cL}}{\dot{x}} }{ \xi } \\
&= \inner{ \EL(\overline{\cL}) }{ \xi } + \cO(\varepsilon),
\end{align*}
where the error term comes from the fact that $\dot{\xi}$ is evaluated on the unperturbed system. We also have that
\[
\overline{\xi^{(1)}}\cL
= \inner{ \der{\xi_1}{\dot{x}} }{ \EL(\overline{\cL}) } \dot{x}_1 +  \inner{ \der{\xi_2}{\dot{x}} }{ \EL(\overline{\cL}) } \dot{x}_2
= \inner{ \der{\xi_1}{\dot{x}} \dot{x}_1 + \der{\xi_2}{\dot{x}} \dot{x}_2 }{ \EL(\overline{\cL}) } .
\]
For our choice of $\xi$, defined in Equation \eqref{defxi}, we have $ \der{\xi_1}{\dot{x}} \dot{x}_1 + \der{\xi_2}{\dot{x}} \dot{x}_2 = (\xi_1,\xi_2) = \xi$, hence Equation \eqref{precession-step1} simplifies to
\[ \frac{\D A}{\D t} = 2 \varepsilon \inner{ \EL(\overline{\cL}) }{ \xi } + \cO(\varepsilon^2) . \]
The change in angle of the Laplace-Runge-Lenz vector is given by
\[ \dot{\omega} = \frac{\D}{\D t} \left( \arctan\frac{B}{A} \right) = \frac{1}{A^2 + B^2} \left( A \frac{\D B}{\D t} - B \frac{\D A}{\D t} \right). \]
Choose a coordinate system such that $A = \cO(\varepsilon)$ and $B \geq 0$. Then $B$ approximately equals the eccentricity $e$ and the derivative of the angle of the LRL vector is
\[ \dot{\omega} = -\frac{1}{B} \frac{\D A}{\D t} + \cO(\varepsilon^2)
= -\frac{2\varepsilon}{e} \inner{ \EL(\overline{\cL}) }{ \xi } + \cO(\varepsilon^2) . \qedhere \]
\end{proof}

\section{Numerical precession}

We now apply Proposition \ref{prop-precession} to the modified Lagrangians from Section\ \ref{sect-old}. This gives us a leading order estimate of the precession rates of the integrators.

\subsection{St\"ormer-Verlet scheme}

The perturbation term of the truncated modified Lagrangian \eqref{modlag-sv} is
\[ \varepsilon \overline{\cL} = \frac{h^2}{24} \left( \frac{1}{|x|^4} - 2 \frac{ |\dot{x}|^2 }{|x|^3} + 6 \frac{\inner{ x }{ \dot{x} }^2}{|x|^5} \right) . \]
In the following we identify $\varepsilon = \frac{h^2}{24}$. We want to evaluate Equation \eqref{precession}. Using the leading order equations of motion \eqref{eqsofmot} we find
\[ \EL(\overline{\cL}) = 4 \frac{x}{|x|^6} - 6 \frac{|\dot{x}|^2 x}{|x|^5} + 30 \frac{\inner{x}{\dot{x}}^2 x}{|x|^7} - 12 \frac{\inner{x}{\dot{x}} \dot{x}}{|x|^5} + \cO(h^2). \]
Using the fact that $\inner{x}{\xi} = \frac{1}{2} (x_1 \dot{x}_2 - \dot{x}_1 x_2) x_2 = \frac{1}{2} \mathds{L} x_2$ and $\inner{\dot{x}}{\xi} = \mathds{L} \dot{x}_2$, the leading order equations of motion, and Proposition \ref{prop-EandL} we obtain
\begin{align}
\average{ \inner{ \EL(\overline{\cL}) }{ \xi } }
&= \average{2 \frac{x_2}{|x|^6} - 3 \frac{|\dot{x}|^2 x_2}{|x|^5} + 15 \frac{\inner{x}{\dot{x}}^2 x_2}{|x|^7} - 12 \frac{\inner{x}{\dot{x}} \dot{x}_2}{|x|^5} } \mathds{L} + \cO(h^2) \notag \\
&= \average{30 \frac{x_2}{|x|^6} + 24 \mathds{E} \frac{x_2}{|x|^5} - 15 \mathds{L}^2 \frac{x_2}{|x|^7} + 4 \frac{\D}{\D t} \frac{\dot{x}_2}{|x|^3} } \mathds{L} + \cO(h^2) . \label{before-lemma}
\end{align}
The average $[\cdot]$ is taken along the unperturbed orbit, which is periodic, so $\average{ \frac{\D}{\D t}\frac{\dot{x}_2}{|x|^3} } = 0$. For the other terms we have the following Lemma.
\begin{lemma}\label{lemma-averages}
On solutions of the unperturbed Kepler problem for which the major axis is the $x_2$-axis there holds
\begin{enumerate}[$(a)$]
\item $\displaystyle \average{\frac{x_2}{|x|^5}} = \frac{a}{b^5} e$,
\item $\displaystyle \average{\frac{x_2}{|x|^6}} = \frac{a^2}{b^7} \left( \frac{3}{2} e + \frac{3}{8} e^3 \right)$,
\item $\displaystyle \average{\frac{x_2}{|x|^7}} = \frac{a^3}{b^9} \left( 2 e + \frac{3}{2} e^3 \right)$,
\end{enumerate}
where $a$ and $b$ are the semimajor and semiminor axes of the orbit respectively, and $e$ is the eccentricity.
\end{lemma}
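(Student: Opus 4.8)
The plan is to reduce each time-average to an elementary integral over the eccentric anomaly $E$. Parametrize the unperturbed orbit, whose focus is at the origin and whose major axis is the $x_2$-axis, by $E$, so that $x_2 = a(\cos E - e)$ and $|x| = a(1 - e\cos E)$. Kepler's equation $E - e\sin E = \frac{2\pi}{T}t$ then gives $\D t = \frac{T}{2\pi}(1 - e\cos E)\,\D E = \frac{T}{2\pi}\frac{|x|}{a}\,\D E$, so the time-average of any quantity $Q$ becomes $\average{Q} = \frac{1}{2\pi a}\int_0^{2\pi} Q\,|x|\,\D E$. Substituting $Q = x_2/|x|^n$ collapses this to
\[ \average{\frac{x_2}{|x|^n}} = \frac{a^{1-n}}{2\pi}\int_0^{2\pi}\frac{\cos E - e}{(1-e\cos E)^{n-1}}\,\D E. \]

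First I would treat the single family of integrals $J_m = \frac{1}{2\pi}\int_0^{2\pi}(1-e\cos E)^{-m}\,\D E$. The base case $J_1 = (1-e^2)^{-1/2}$ is standard, and differentiating under the integral sign in $e$ yields the recurrence $J_{m+1} = J_m + \frac{e}{m}\frac{\D J_m}{\D e}$, from which $J_2,\dots,J_6$ follow as explicit products of a polynomial in $e^2$ with a half-integer power of $(1-e^2)$.

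Next I would rewrite the numerator using $\cos E = \frac{1}{e}\bigl(1 - (1-e\cos E)\bigr)$, which turns the integral above (with denominator exponent $m = n-1$) into $\frac{1}{e}\bigl[(1-e^2)J_m - J_{m-1}\bigr]$. Substituting the computed $J_m$ and simplifying should give $e(1-e^2)^{-5/2}$ for $n=5$, then $(\tfrac32 e + \tfrac38 e^3)(1-e^2)^{-7/2}$ for $n=6$, and $(2e + \tfrac32 e^3)(1-e^2)^{-9/2}$ for $n=7$. Finally, using $1 - e^2 = b^2/a^2$ from Proposition \ref{prop-EandL} converts each power of $(1-e^2)$ into a ratio $a^k/b^k$, and combining with the prefactor $a^{1-n}$ produces exactly the three claimed $a^k/b^\ell$ factors.

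The conceptual content here is light: this is a verification by a standard change of variables, and the only real obstacle is bookkeeping. The work lies in computing $J_m$ up to $m = 6$ through the recurrence and simplifying the resulting polynomials in $e^2$ without error, and then in checking that the half-integer powers of $(1-e^2)$ recombine with the prefactor $a^{1-n}$ into precisely the stated coefficients. As an alternative to the differentiation recurrence one could evaluate the $J_m$ by residues via $z = e^{iE}$, but the recurrence keeps the dependence on $e$ transparent and makes the simplification to polynomials in $e$ immediate.
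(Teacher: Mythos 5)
Your proposal is correct --- I checked that the recurrence $J_{m+1}=J_m+\frac{e}{m}\frac{\D J_m}{\D e}$ (which follows from $\cos E = \frac1e\bigl(1-(1-e\cos E)\bigr)$) together with $J_1=(1-e^2)^{-1/2}$ yields $J_4,J_5,J_6$ and hence the three stated coefficients --- but it takes a genuinely different route from the paper. The paper parametrizes by the true anomaly instead of the eccentric anomaly: writing $x_1=-r\sin\theta$, $x_2=r\cos\theta$ and using conservation of angular momentum, $\D t = (r^2/\mathds{L})\,\D\theta$, together with the orbit equation $r=(b^2/a)/(1+e\cos\theta)$, it obtains
\[ \average{\frac{x_2}{|x|^k}} = \frac{b^{5-2k}}{\pi a^{4-k}}\int_0^\pi (1+e\cos\theta)^{k-3}\cos\theta\,\D\theta. \]
Since $k-3\in\{2,3,4\}$ the integrand is a polynomial in $\cos\theta$, so the whole computation collapses to the binomial theorem plus $\int_0^\pi\cos^2\theta\,\D\theta=\frac{\pi}{2}$ and $\int_0^\pi\cos^4\theta\,\D\theta=\frac{3\pi}{8}$ (odd powers vanish); no nontrivial integral family like your $J_m$ ever appears. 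The true anomaly is tuned to this lemma because the numerator $x_2/|x|$ is exactly $\cos\theta$ and the leftover power of $1/|x|$ becomes a \emph{positive} power of $(1+e\cos\theta)$, whereas your eccentric-anomaly substitution leaves a negative power of $(1-e\cos E)$ and forces you to evaluate $J_1,\dots,J_6$. What your route buys is uniformity --- a single closed formula valid for any exponent $n$, with the $e$-dependence transparent through the $J_m$ --- at the cost of substantially more bookkeeping. Note also that both arguments implicitly place the perihelion on the positive $x_2$-axis; that orientation convention is what fixes the overall sign of $e$ in the three formulas.
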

\begin{proof}
Introduce polar coordinates $x_1 = - r \sin \theta$, $x_2 = r \cos \theta$, where $\theta=0$ corresponds to the positive $x_2$-axis. We have
\[  \average{\frac{x_2}{|x|^k}} = \average{\frac{cos \theta}{|x|^{k-1}}} 
= \frac{1}{T} \int_0^T \frac{cos \theta}{|x|^{k-1}} \,\D t . \]
Using Kepler's laws as in \cite{curtis1987expectation} we can rewrite this as
\begin{align*}
 \average{\frac{x_2}{|x|^k}} 
&= \frac{b^{5-2k}}{\pi a^{4-k}} \int_0^\pi (1 + e \cos\theta)^{k-3} \cos\theta \,\D \theta \\
&= \frac{b^{5-2k}}{\pi a^{4-k}} \int_0^\pi \sum_j \binom{k-3}{j} e^j \cos^{j+1} \theta \,\D \theta.
\end{align*}
Whenever, $j$ is even, we have $\int_0^\pi \cos^{j+1} \theta \,\D \theta = 0$. For $j = 1$ and $j = 3$ we find $\int_0^\pi \cos^2 \theta \,\D \theta = \frac{\pi}{2}$ and $\int_0^\pi \cos^4 \theta \,\D \theta = \frac{3 \pi}{8}$. Hence
\[ \average{\frac{x_2}{|x|^k}} = 
 \frac{b^{5-2k}}{\pi a^{4-k}} \left( \frac{\pi}{2}\binom{k-3}{1}e + \frac{3 \pi}{8}\binom{k-3}{3} e^3 + \ldots \right).\]
The claims now follow by evaluating this expression for $k=5,6,7$.
\end{proof}

Combining Proposition \ref{prop-precession}, Eq.\ \ref{before-lemma}, and Lemma \ref{lemma-averages} we find that the precession per revolution is given by
\begin{align*}
& -4 \pi a^{3/2}  \frac{h^2}{24} \left( 30 \frac{a^2}{b^7} \left( \frac{3}{2} + \frac{3}{8} e^2 \right) + 24 \frac{-1}{2a} \frac{a}{b^5} - 15 \frac{b^2}{a} \frac{a^3}{b^9} \left( 2 + \frac{3}{2} e^2 \right) \right) \frac{b}{\sqrt{a}} \sgn(\mathds{L}) + \cO(h^4)\\
&\quad = -4 \pi ab \frac{h^2}{24} \left( 30 \frac{a^2}{b^7} \left( \frac{15}{8} - \frac{3}{8} \frac{b^2}{a^2} \right) + 24 \frac{-1}{2a} \frac{a}{b^5} - 15 \frac{b^2}{a} \frac{a^3}{b^9} \left( \frac{7}{2} - \frac{3}{2} \frac{b^2}{a^2} \right) \right) \sgn(\mathds{L}) + \cO(h^4) \\
&\quad = -\frac{\pi h^2}{24} \left( 15 \frac{a^3}{b^6} - 3 \frac{a}{b^4} \right) \sgn(\mathds{L}) + \cO(h^4) ,
\end{align*}
assuming the major axis of the orbit is $\cO(h^2)$-close to the $x_2$-axis. However, since both this expression and the perturbed Kepler problem are rotationally symmetric, we can conclude that statement holds regardless of the orientation of the major axis.

In summary we have the following:

\begin{theorem}\label{thm-sv}
The numerical precession rate of the St\"ormer-Verlet method with step size $h$ is 
\[ -\sgn(\mathds{L}) \frac{\pi}{24} \left(15 \frac{a^3}{b^6} - 3 \frac{a}{b^4} \right)h^2 + \cO(h^4), \]
where $a$ and $b$ denote the semimajor and semiminor axes of the orbit of the exact solution.
\end{theorem}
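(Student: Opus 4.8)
The plan is to specialize the general precession formula \eqref{precession} to the St\"ormer--Verlet perturbation and then reduce everything to the three elementary averages supplied by Lemma \ref{lemma-averages}. First I would read off the perturbation $\overline{\cL}$ from the modified Lagrangian \eqref{modlag-sv}, setting $\varepsilon = \frac{h^2}{24}$ so that Proposition \ref{prop-precession} applies verbatim. Because \eqref{precession} already carries a factor $\varepsilon$ in front, I only need $\EL(\overline{\cL})$ to leading order, which means I may substitute the unperturbed equations of motion $\ddot{x} = -\frac{x}{|x|^3}$ whenever a time derivative of $\dot{x}$ appears. Differentiating $\overline{\cL}$ this way produces $\EL(\overline{\cL})$ as a sum of four terms, each a scalar function of $|x|$, $|\dot{x}|^2$ and $\inner{x}{\dot{x}}$ times $x$ or $\dot{x}$, up to an $\cO(h^2)$ error.

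Next I would form $\inner{\EL(\overline{\cL})}{\xi}$. The two identities $\inner{x}{\xi} = \frac{1}{2}\mathds{L}x_2$ and $\inner{\dot{x}}{\xi} = \mathds{L}\dot{x}_2$ collapse every term into $\mathds{L}$ times a scalar multiple of $x_2$ or $\dot{x}_2$, so an overall factor $\mathds{L}$ factors out. To prepare for the period average I would then eliminate the velocity dependence using the first integrals from the opening proposition: the energy identity $\mathds{E} = \frac{1}{2}|\dot{x}|^2 - \frac{1}{|x|}$ lets me replace $|\dot{x}|^2$, and $\mathds{L}^2 = |x||\dot{x}|^2 - \inner{x}{\dot{x}}^2$ lets me replace $\inner{x}{\dot{x}}^2$, both by expressions in the conserved quantities and powers of $|x|$. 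Carrying this out should reorganize $\inner{\EL(\overline{\cL})}{\xi}$ into the form displayed in \eqref{before-lemma}, in which one contribution is the total time derivative $\frac{\D}{\D t}\frac{\dot{x}_2}{|x|^3}$ and the remaining three are multiples of $\frac{x_2}{|x|^5}$, $\frac{x_2}{|x|^6}$ and $\frac{x_2}{|x|^7}$.

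I would then average over one period. Because the unperturbed orbit is closed, the total-derivative term contributes nothing, and Lemma \ref{lemma-averages} evaluates the three surviving averages in closed form; crucially each of them carries a common factor of $e$, which cancels the $\frac{1}{e}$ in \eqref{precession}. Substituting these, together with $T = 2\pi a^{3/2}$, $\mathds{E} = -\frac{1}{2a}$ and $\mathds{L}^2 = \frac{b^2}{a}$ (so that $\mathds{L} = \sgn(\mathds{L})\frac{b}{\sqrt{a}}$ produces the factor $\sgn(\mathds{L})$), reduces the precession per revolution to a rational expression in $a$, $b$ and $e$. Finally, rewriting $e^2 = 1 - \frac{b^2}{a^2}$ throughout eliminates the eccentricity and collapses the result to $-\sgn(\mathds{L})\frac{\pi}{24}\bigl(15\frac{a^3}{b^6} - 3\frac{a}{b^4}\bigr)h^2$, which is pure algebra.

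The main obstacle is twofold. Computationally, it is the length of the bookkeeping: keeping the four terms of $\EL(\overline{\cL})$, the substitutions for $|\dot{x}|^2$ and $\inner{x}{\dot{x}}^2$, and the three averages all aligned without a sign or coefficient slip. Conceptually, it is matching the hypotheses of the two ingredients: Proposition \ref{prop-precession} only requires the major axis to be $\cO(\varepsilon)$-close to the $x_2$-axis, whereas Lemma \ref{lemma-averages} assumes it coincides with the $x_2$-axis. I would resolve this by first proving the estimate under the stronger assumption and then invoking the rotational symmetry of both the precession and the perturbed Kepler problem to conclude that the result is independent of the orbit's orientation, thereby removing the restriction.
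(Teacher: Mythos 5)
Your proposal is correct and follows essentially the same route as the paper: compute $\EL(\overline{\cL})$ to leading order using the unperturbed equations of motion, contract with $\xi$ via $\inner{x}{\xi}=\frac{1}{2}\mathds{L}x_2$ and $\inner{\dot{x}}{\xi}=\mathds{L}\dot{x}_2$, eliminate velocities with the first integrals to reach the form \eqref{before-lemma}, average using Lemma \ref{lemma-averages}, and remove the orientation restriction by rotational symmetry. All the key steps, including the cancellation of the $1/e$ factor and the origin of $\sgn(\mathds{L})$, match the paper's argument.
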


For the example shown in Figure\ \ref{fig-sv}, the precession rate predicted by Theorem \ref{thm-sv} is $0.067$ radians per revolution and the observed numerical precession rate is $0.064$ radians per revolution.

\subsection{Implicit midpoint rule}

In exactly the same way as for the St\"ormer-Verlet method, we obtain the following result:

\begin{theorem}\label{thm-mp}
The numerical precession rate of the midpoint rule with step size $h$ is 
\[ \sgn(\mathds{L}) \frac{\pi}{12} \left(15 \frac{a^3}{b^6} - 3 \frac{a}{b^4} \right)h^2 + \cO(h^4). \]
\end{theorem}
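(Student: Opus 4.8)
The plan is to rerun the derivation of Theorem \ref{thm-sv} verbatim, feeding in the midpoint-rule perturbation in place of the St\"ormer-Verlet one. Reading the $\cO(h^2)$ correction off the modified Lagrangian \eqref{modlag-mp} and again identifying $\varepsilon = \frac{h^2}{24}$, I take $\overline{\cL} = \frac{1}{|x|^4} + \frac{|\dot{x}|^2}{|x|^3} - 3\frac{\inner{x}{\dot{x}}^2}{|x|^5}$. The one observation that makes this cheap is that $\overline{\cL}$ differs from the St\"ormer-Verlet perturbation only in its velocity-dependent part, and that part is exactly $-\frac{1}{2}$ times the corresponding St\"ormer-Verlet terms, while the $\frac{1}{|x|^4}$ term is identical. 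Since $\overline{\cL} \mapsto \EL(\overline{\cL})$ followed by substitution of the leading-order equations of motion \eqref{eqsofmot} is a linear operation, and since averaging the pairing $\inner{\EL(\overline{\cL})}{\xi}$ over one period is linear as well (the total-derivative terms drop out), the quantity $\average{\inner{\EL(\overline{\cL})}{\xi}}$ that enters \eqref{precession} is a linear functional of $\overline{\cL}$.

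I would exploit this as follows. Write $\overline{\cL} = \overline{\cL}_{\mathrm{stat}} + \overline{\cL}_{\mathrm{vel}}$, with $\overline{\cL}_{\mathrm{stat}} = \frac{1}{|x|^4}$ shared by both methods. For St\"ormer-Verlet the full average $\average{\inner{\EL(\overline{\cL})}{\xi}}$ is already computed in \eqref{before-lemma} and Lemma \ref{lemma-averages}; its static piece is $\average{\inner{-4\frac{x}{|x|^6}}{\xi}} = -2\mathds{L}\average{\frac{x_2}{|x|^6}}$, given directly by Lemma \ref{lemma-averages}$(b)$. Subtracting the static piece isolates the St\"ormer-Verlet velocity average, so the midpoint average is $\average{\inner{\EL(\overline{\cL})}{\xi}}_{\mathrm{MP}} = \average{\inner{\EL(\overline{\cL}_{\mathrm{stat}})}{\xi}} - \frac{1}{2}\big( \average{\inner{\EL(\overline{\cL})}{\xi}}_{\mathrm{SV}} - \average{\inner{\EL(\overline{\cL}_{\mathrm{stat}})}{\xi}} \big)$. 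Inserting this into \eqref{precession} together with $\mathds{E} = -\frac{1}{2a}$, $\mathds{L}^2 = \frac{b^2}{a}$, $T = 2\pi a^{3/2}$ and $e = \sqrt{1-\frac{b^2}{a^2}}$ from Proposition \ref{prop-EandL}, and simplifying the resulting polynomial in $e^2$, should produce $\sgn(\mathds{L})\frac{\pi}{12}\big( 15\frac{a^3}{b^6} - 3\frac{a}{b^4} \big)h^2$. Exactly as for St\"ormer-Verlet, the computation is carried out assuming the major axis is $\cO(h^2)$-close to the $x_2$-axis, and this hypothesis is then removed by appealing to the rotational symmetry shared by the perturbed problem and the Laplace-Runge-Lenz observable.

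If one prefers not to recycle the St\"ormer-Verlet output, the direct route is available: compute $\EL(\overline{\cL})$ for the midpoint perturbation, reduce $\inner{\EL(\overline{\cL})}{\xi}$ using $\inner{x}{\xi} = \frac{1}{2}\mathds{L}x_2$ and $\inner{\dot{x}}{\xi} = \mathds{L}\dot{x}_2$ for $\xi$ from \eqref{defxi} together with the leading-order equations of motion, and collapse it to a multiple of $\mathds{L}$ times a linear combination of $\frac{x_2}{|x|^5}$, $\frac{x_2}{|x|^6}$, $\frac{x_2}{|x|^7}$ plus a total time derivative before averaging with Lemma \ref{lemma-averages}. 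I expect the main obstacle to be precisely this reduction: applying $\frac{\D}{\D t}\der{}{\dot{x}}$ to the velocity terms, eliminating the accelerations through $\ddot{x} = -\frac{x}{|x|^3}$, and converting the $|\dot{x}|^2$ and $\inner{x}{\dot{x}}^2$ factors via the energy and the identity $\mathds{L}^2 = |x||\dot{x}|^2 - \inner{x}{\dot{x}}^2$ without a sign slip --- the same bookkeeping that produced \eqref{before-lemma} in the St\"ormer-Verlet case. The linearity shortcut above is attractive precisely because it imports this already-verified reduction rather than repeating it. As a final consistency check I would verify that the resulting coefficient is twice the magnitude of, and opposite in sign to, the St\"ormer-Verlet coefficient of Theorem \ref{thm-sv}, and compare the predicted rate with the numerical precession shown in Figure \ref{fig-mp}.
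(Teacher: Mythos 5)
Your proposal is correct, and it arrives at the stated coefficient (I checked: your decomposition gives $\average{\inner{\EL(\overline{\cL})}{\xi}}_{\mathrm{MP}} = 2S$ with $S = -2\mathds{L}\average{x_2/|x|^6}$, and feeding this through Proposition \ref{prop-precession} with Proposition \ref{prop-EandL} yields exactly $\sgn(\mathds{L})\frac{\pi}{4}\bigl(5\frac{a^3}{b^6}-\frac{a}{b^4}\bigr)h^2$). The paper's proof is literally the one sentence ``in exactly the same way as for the St\"ormer--Verlet method,'' i.e.\@ the direct recomputation you describe as your fallback route, so your fallback coincides with the paper. Your primary route --- splitting $\overline{\cL}$ into the shared static piece $1/|x|^4$ and a velocity piece that is exactly $-\tfrac12$ times the St\"ormer--Verlet velocity piece, then using linearity of $\overline{\cL}\mapsto\average{\inner{\EL(\overline{\cL})}{\xi}}$ --- is a genuine shortcut the paper does not take. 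What it buys is twofold: it imports the already-verified reduction to Lemma \ref{lemma-averages} instead of redoing the error-prone elimination of accelerations and velocities, and it explains structurally why the two precession rates differ by exactly the factor $-2$ that the paper merely remarks on after Theorem \ref{thm-mp} (the computation shows the velocity contribution equals $-2$ times the static one, so SV gives $S+V=-S$ while MP gives $S-\tfrac12 V=2S$). The one point worth stating explicitly in a write-up is that the substitution of the leading-order equations of motion inside $\EL(\overline{\cL})$ preserves this linearity only up to $\cO(h^2)$, which is harmless since it enters the precession at $\cO(h^4)$.
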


Note that this expression differs by exactly a factor $-2$ from the expression for the St\"ormer-Verlet method. We will exploit this in the next section to construct new integrators.

For the example shown in Figure\ \ref{fig-mp}, the precession rate predicted by Theorem \ref{thm-mp} is $-0.13$ radians per revolution and the observed numerical precession rate is $-0.16$ radians per revolution.

\section{New integrators}\label{sect-new}

Based on Theorems \ref{thm-sv} and \ref{thm-mp} we propose three new integrators. They all have a precession rate of order $\cO(h^4)$ instead of $\cO(h^2)$.

\subsection{Linear combination of the Lagrangians}

Consider the discrete Lagrangian
\begin{align*}
 L(x_j,x_{j+1}) &= \frac{2}{3} L_{SV}(x_j,x_{j+1}) + \frac{1}{3} L_{MP}(x_j,x_{j+1}) \\
&=  \frac{1}{2} \left| \frac{x_{j+1}-x_j}{h} \right|^2 - \frac{1}{3} U\left( x_{j} \right) - \frac{1}{3} U\left( x_{j+1} \right) - \frac{1}{3} U\left( \frac{x_{j} + x_{j+1}}{2} \right).
\end{align*}
Its Euler-Lagrange equations define an implicit method,
\[ x_{j+1} - 2 x_j + x_{j-1} = - \frac{2 h^2}{3} U'(x_j) - \frac{h^2}{6} U'\left( \frac{x_{j-1} + x_{j}}{2} \right) - \frac{h^2}{6} U'\left( \frac{x_{j} + x_{j+1}}{2} \right). \]
We refer to this integrator as the mixed Lagrangian (ML) method.

\subsection{Lagrangian Composition}

Consider the discrete Lagrangians
\[ L_j(x_k,x_{k+1}) = \begin{cases}
L_{MP}(x_k,x_{k+1}) = \frac{1}{2} \left| \frac{x_{k+1}-x_k}{h} \right|^2 - U\left(\frac{x_k + x_{k+1}}{2}\right) & \text{if } 3|j , \\
L_{SV}(x_k,x_{k+1}) = \frac{1}{2} \left|\frac{x_{k+1}-x_k}{h}\right|^2 - \frac{1}{2} U(x_k) - \frac{1}{2} U(x_{k+1}) & \text{otherwise.}
\end{cases} \]
We look for a discrete curve $(x_j)_j$ that extremizes the action
\[ \sum_{j=1}^N L_j(x_{j-1},x_j) = L_{SV}(x_0,x_1) + L_{SV}(x_1,x_2) + L_{MP}(x_2,x_3) + \ldots . \]
This gives us three different Euler-Lagrange equations which are applied for different values of $j$\ mod\ 3. Indeed $\mathrm{D}_2 L_j(x_{j-1},x_j) + \mathrm{D}_1 L_{j+1}(x_j,x_{j+1})$ simplifies to
\[ \begin{cases}
\displaystyle x_{j+1}-2x_j+x_{j-1} = -\frac{h^2}{2} U'\left(\frac{x_{j-1}+x_j}{2}\right) - \frac{h^2}{2} U'(x_j) & \qquad\text{if } j \equiv 0 \mod 3 , \\
\displaystyle x_{j+1}-2x_j+x_{j-1} = - h^2 U'(x_j) & \qquad\text{if } j \equiv 1 \mod 3 , \\
\displaystyle x_{j+1}-2x_j+x_{j-1} = -\frac{h^2}{2} U'\left(\frac{x_j+x_{j+1}}{2}\right) - \frac{h^2}{2} U'(x_j) & \qquad\text{if } j \equiv 2 \mod 3 .
\end{cases} \]
Hence to determine the evolution we alternate between the St\"ormer-Verlet method (for $j \equiv 1$\ mod\ 3) and two new difference equations. We refer to this integrator as the Lagrangian composition (LC) method. Strictly speaking the LC method should be considered as an integrator with step size $3h$, but for fair comparison with the other methods we will still refer to the internal step $h$ as the step size.

This method of composing variational integrators is equivalent to composing the corresponding symplectic maps \cite[Section\ 2.5]{marsden2001discrete}.

\subsection{Composition of the difference equations}

Alternatively we can compose the difference equations obtained by the implicit midpoint rule and the St\"ormer-Verlet method respectively,
\begin{equation*}
\begin{cases}
\displaystyle x_{j+1}-2x_j+x_{j-1} = -\frac{h^2}{2} U'\left(\frac{x_{j-1}+x_j}{2}\right) -\frac{h^2}{2} U'\left(\frac{x_j+x_{j+1}}{2}\right) & \text{if } j \equiv 2 \ \text{ mod } 3,  \\
\displaystyle x_{j+1}-2x_j+x_{j-1} = - h^2 U'(x_j) & \text{otherwise.}
\end{cases}
\end{equation*}
We refer to this integrator as the difference equation composition (DEC) method. As for the LC method, we will abuse terminology and call the internal step $h$ the step size.

It is not clear if this construction yields a variational method, but numerical experiments show long-term near-conservation of energy and angular momentum. This seems to be a general phenomenon: also for other potentials $U$ and other variational integrators, the corresponding DEC method shows the long-term behavior one expects from a variational integrator.

\section{Numerical results}

In this section we compare the new methods of Section\ \ref{sect-new} numerically with the St\"ormer-Verlet scheme, the implicit midpoint rule, and two fourth order symplectic methods: the well-known integrator of Forest and Ruth \cite{forest1989fourth} and Chin's ``C'' algorithm which is especially well-suited for the Kepler problem \cite{chin1997symplectic,chin2000higher}.

\subsection{Choice of initial values}\label{sect-init}

In all our examples we use the initial values
\[ x(0) = (-3,0) \qquad \text{and} \qquad \dot{x}(0) = (0,0.45) . \]
For the discretizations we need specify $x_0 = x(0)$ and $x_1 \approx x(h)$. Our convention is to choose $x_1$ such that the discrete momentum $p_0 = -\mathrm{D}_1 L(x_0,x_1)$ equals the  initial velocity $\dot{x}(0)$. 

For the composition of difference equations no discrete Lagrangian and hence no discrete momentum is known. To determine the second initial point $x_1$ in this case we use the momentum $p_0$ corresponding to the St\"ormer-Verlet method, because this is the method we would have used to calculate $x_1$ if $x_0$ was not the first point.

The choice of the initial value $x_1$ does not affect the precession behavior. However, it can have a significant effect on the error over time. If the initial condition has a slightly wrong energy, then the period of the numerical solution will have a slight error as well. This will cause a linearly growing phase shift.

\subsection{Precession}\label{sect-experiment}

Figure\ \ref{fig-angle} shows the precession rates on a logarithmic scale for all five methods and a few choices of step size. It shows that the precession rates of the new methods behave like $h^4$, compared to $h^2$ for the methods from Section\ \ref{sect-old}. 

As for the three new methods, the mixed Lagrangian method beats the Lagrangian composition method, but the surprising winner is the composition of difference equations.

All our new methods have smaller precession rates than the fourth order symplectic integrator of Forest and Ruth \cite{forest1989fourth}. On the other hand,  Chin's fourth order symplectic ``C'' algorithm \cite{chin1997symplectic,chin2000higher} outperforms our methods.

\begin{figure}[h]
\centering
\includegraphics[width=\linewidth]{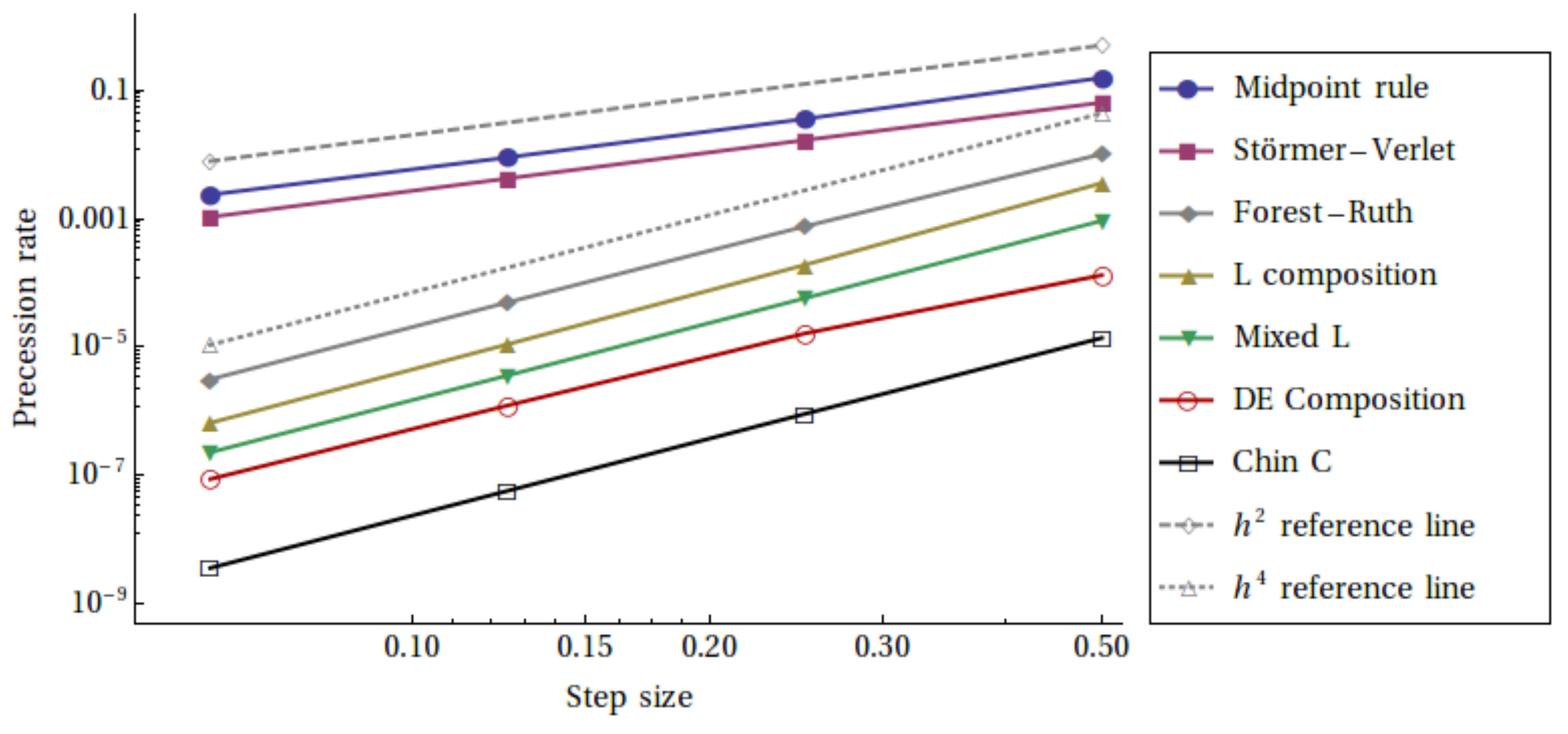}
\caption{Precession rate in radians per revolution for the different methods with step sizes $h=0.0625$, $h=0.125$, $h=0.25$ and $h=0.5$.}\label{fig-angle}
\end{figure}

\subsection{Total error}

The precession rate is not as closely related to the total error as one might expect. In many cases the numerical solution has a phase shift which contributes significantly to the total error. For the composition methods LC and DEC this phase shift is highly dependent on the step size and the initial conditions. Hence the total error growth for these methods is also sensitive to the choice of step size and initial conditions. This can be seen by comparing Figure\ \ref{fig-error45} and Figure\ \ref{fig-error50}.

\begin{figure}[h]
\centering
\includegraphics[width=\linewidth]{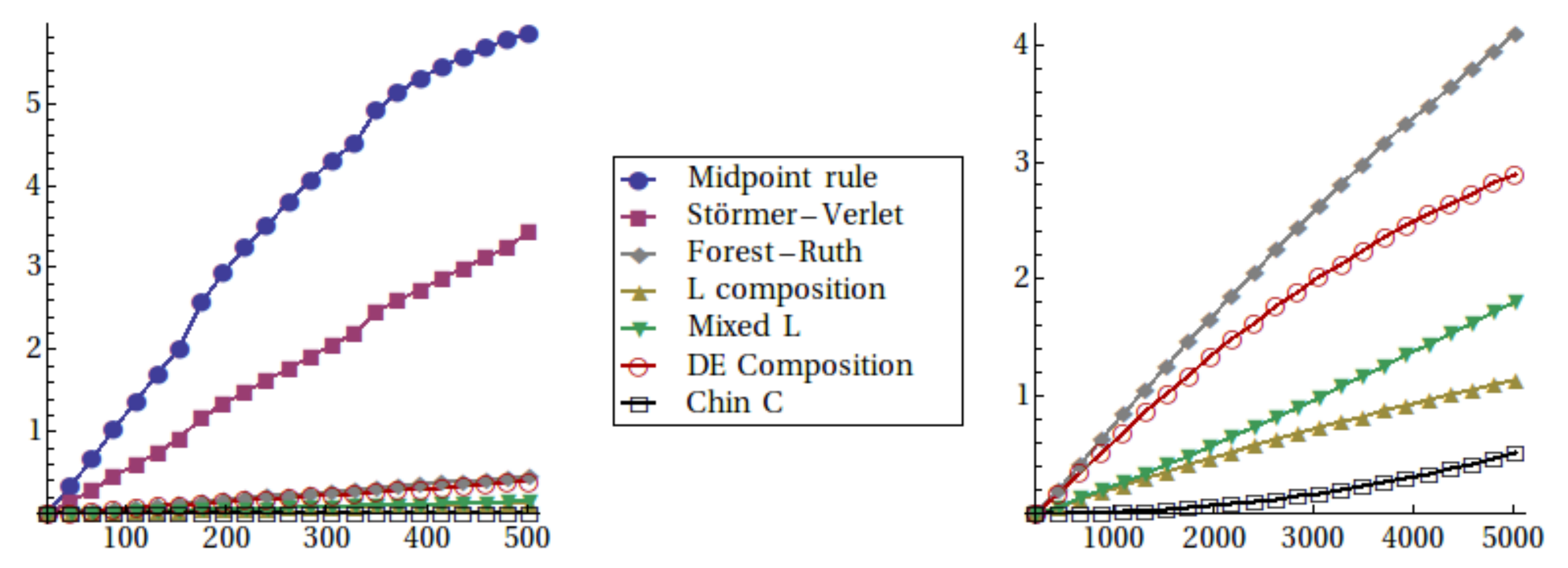}
\caption{The evolution of the error in position over a time interval of length $500$ (left) and $5\,000$ (right) with step size and $h=0.45$.}\label{fig-error45}
\end{figure}
\begin{figure}[h]
\centering
\includegraphics[width=\linewidth]{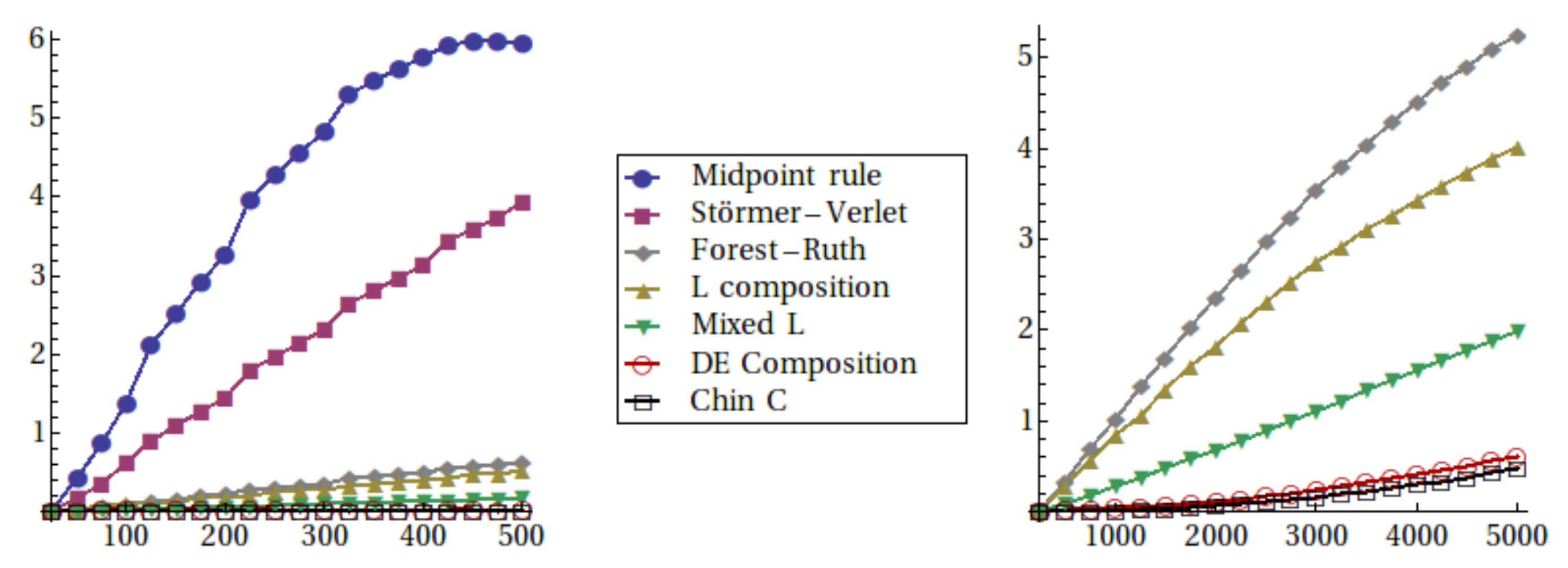}
\caption{The evolution of the error with step size $h=0.5$.}\label{fig-error50}
\end{figure}

\subsection{Speed}

To give a rough comparison of the computational effort required for the different methods, we list the running times of a long time calculation (20\,000 steps):
\begin{table}[h]
\centering
\begin{tabular}{rlr|rlr}
St\"ormer-Verlet & (SV) & 0.67s \quad & Mixed Lagrangian & (ML) & 23s \\
MidPoint rule & (MP) \quad & 22s \quad & \, Difference Equation composition & (DEC) \quad & 7.9s \\
Forest-Ruth & (FR) & 2.0s \quad & Lagrangian Composition & (LC) & 8.2s \\
Chin C & (C) & 2.2s \quad \\
\end{tabular}%
\end{table}%
\newline
We made a limited effort towards optimizing our implementation, so the given running times should only be taken as a rough indication. As expected the explicit methods SV, FR, and C are the fastest. For the composition methods DEC and LC only one out of every three steps is implicit, hence they are significantly faster than MP and ML.

\bigskip
\section*{Acknowledgements}

\small
This research is supported by the DFG Collaborative Research Center TRR 109 ``Discretization in Geometry and Dynamics''.

\bibliographystyle{abbrv}
\bibliography{kepler}

\end{document}